\documentclass[11pt]{amsart}

\usepackage[T1]{fontenc}
\usepackage[utf8]{inputenc}
\usepackage{amsmath,amssymb,amsthm}
\usepackage{geometry}
\theoremstyle{plain}
\newtheorem{theorem}{Theorem}[section]
\newtheorem{proposition}[theorem]{Proposition}
\newtheorem{lemma}[theorem]{Lemma}
\newtheorem{corollary}[theorem]{Corollary}
\theoremstyle{definition}
\newtheorem{definition}[theorem]{Definition}
\newtheorem{example}[theorem]{Example}
\theoremstyle{remark}
\newtheorem{remark}[theorem]{Remark}

\newcommand{\Der}{\operatorname{Der}}
\newcommand{\Rad}{\operatorname{Rad}}
\newcommand{\Ann}{\operatorname{Ann}}
\newcommand{\Aext}[1]{\textstyle\bigwedge_{A}^{#1}}
\newcommand{\ad}{\operatorname{ad}}

\begin{document}

\title[Anchor Radicals and Metric Rigidity in $n$-Lie--Rinehart Algebras]
{Anchor Radicals and Metric Rigidity\\ in $n$-Lie--Rinehart Algebras}

\author{Irmely Gladesh Mabanza Nsiloulou}
\address{Department of Mathematics, Faculty of Science and Technology,
Marien Ngouabi University, Brazzaville, Congo}
\email{irmelygladesh@gmail.com}

\author{Basile Guy Richard Bossoto}
\address{Department of Mathematics, Faculty of Science and Technology,
Marien Ngouabi University, Brazzaville, Congo}
\email{basile.bossoto@umng.cg}

\subjclass[2010]{17B66, 17A42, 53D17}
\keywords{$n$-Lie algebra, Lie--Rinehart algebra, anchor, anchor radical,
orthogonal structure, Nambu--Poisson manifold}

\begin{abstract}
For an $n$-Lie--Rinehart algebra, $n>2$, the anchor is defined on
$\Aext{n-1}L$, so its kernel does not determine a distinguished submodule
of $L$. We introduce the anchor radical $\Rad(\rho)\subset L$ and prove
that it is an adjoint-stable ideal on which the bracket is $A$-multilinear.
We then derive an explicit obstruction to the $A$-linearity of metric
compatibility in fundamental directions. This obstruction vanishes on
$\Rad(\rho)$, whereas a strongly nondegenerate compatible metric on all of
$L$ forces the anchor to vanish. These results lead to a natural notion of
orthogonal structure on the anchor radical. For anchors induced by Nambu
tensors, the radical is identified with the cotangent annihilator of the
tensor.
\end{abstract}

\maketitle

\section{Introduction}

Lie--Rinehart algebras provide an algebraic setting in which a Lie bracket
on a module is coupled to derivations of a commutative algebra. Their
$n$-ary analogues combine this principle with Filippov's $n$-Lie algebras,
replacing the binary bracket by an alternating $n$-bracket satisfying the
fundamental identity. In this setting, the natural infinitesimal operators
are indexed not by single elements but by fundamental objects. This shift
is already visible for an $n$-Lie algebra $L$: an element
$X=x_1\wedge\cdots\wedge x_{n-1}$ determines an adjoint operator $\ad_X$,
and the fundamental identity governs the commutator of these operators.
The associated Lie-algebraic structure of fundamental objects was studied
in our earlier work \cite{BOO}. It remains a useful organizing principle
when a commutative base algebra and an anchor are introduced.

For an $n$-Lie--Rinehart algebra $(L,A,[\,\cdot\,,\ldots,\cdot\,],\rho)$,
the anchor has the form
\[
\rho:\Aext{n-1}L \longrightarrow \Der_K(A).
\]
This feature creates a structural difference from ordinary Lie--Rinehart
theory. When $n=2$, the kernel of the anchor is a submodule of $L$ and can
therefore be studied directly as an intrinsic part of the underlying
module. For $n>2$, however, $\ker\rho$ lies in $\Aext{n-1}L$. It records
which fundamental objects act trivially on the base algebra, but it does
not single out those elements of $L$ that are invisible to the anchor in
every possible fundamental direction. In particular, $\ker\rho$ is not
itself a natural domain for a bilinear form on $L$.

This distinction becomes important when one asks for metric or orthogonal
structures. For an ordinary metric $n$-Lie algebra, invariance of a
symmetric bilinear form is expressed through the adjoint action. In the
Rinehart setting the corresponding identity contains the anchor derivative
of the coefficient-valued metric. Consequently, metric compatibility is
intertwined with the $A$-module structure, and the dependence on each
factor of a fundamental object must be controlled. A direct extension of
the usual metric condition to all of $L$ is therefore substantially more
restrictive than in the anchor-free case.

The aim of this paper is to isolate the intrinsic part of $L$ on which this
obstruction disappears. We define the anchor radical by
\[
\Rad(\rho)=\{u\in L: \rho(u\wedge x_2\wedge\cdots\wedge x_{n-1})=0
\text{ for all } x_2,\ldots,x_{n-1}\in L\}.
\]
Thus $\Rad(\rho)$ is an elementwise counterpart of the kernel of the
anchor: instead of asking that one fundamental object belong to
$\ker\rho$, one asks that every fundamental object containing $u$ act
trivially on $A$. This definition is also naturally related to the
fundamental-object viewpoint developed for $n$-Lie algebras in \cite{BOO},
but here the relevant representation is the anchor representation rather
than only the adjoint one.

The first main result shows that the anchor radical is not merely an
$A$-submodule. Using the representation identity of an $n$-Lie--Rinehart
algebra alone, we prove that it is stable under the adjoint action and
hence is an ideal (Proposition~\ref{prop:adjoint-stable}). Moreover, the
Leibniz correction terms disappear on the radical, so the restricted
bracket is $A$-multilinear and $\Rad(\rho)$ becomes an $n$-Lie algebra over
$A$ (Remark~\ref{rem:A-multilinear}). This gives a canonical algebraic
locus inside $L$ selected solely by the anchor.

The second issue is metric compatibility. For a symmetric $A$-bilinear
form on $L$, we compute explicitly the change of the metric-compatibility
operator when one factor of a fundamental object is multiplied by an
element of $A$. The resulting formula is governed by anchor terms
involving the metric variables. These terms vanish when the metric
variables lie in $\Rad(\rho)$, which provides an intrinsic reason for
placing an orthogonal structure on the radical rather than imposing it
globally. Conversely, under finite projectivity and strong
nondegeneracy, compatibility on all of $L$ forces $\rho=0$
(Theorem~\ref{thm:rigidity}). This rigidity phenomenon is the main
obstruction result of the paper.

Recent developments on representations, extensions and crossed modules of
$n$-Lie--Rinehart algebras \cite{BCEM}, as well as on morphisms,
comorphisms and Nambu--Poisson applications \cite{BCZ}, provide a broader
framework in which these questions arise. Our purpose here is different
and complementary: we focus on the degeneracy carried by the anchor itself
and on its consequences for invariant bilinear forms. In the geometric
case of an anchor induced by an $n$-vector $P$, we finally show that the
anchor radical coincides with the cotangent annihilator $\Ann(P)$. Thus
the algebraic construction has a direct geometric interpretation for
Nambu-type anchors.

The paper is organized as follows. Section~\ref{sec:prelim} recalls only
the $n$-Lie and $n$-Lie--Rinehart identities needed later.
Section~\ref{sec:radical} develops the anchor radical and its structural
properties. Section~\ref{sec:rigidity} treats orthogonal structures, the
metric obstruction and the rigidity theorem. Section~\ref{sec:nambu} gives
the Nambu--Poisson interpretation.

\section{$n$-Lie--Rinehart algebras and fundamental objects}
\label{sec:prelim}

Throughout the paper, $K$ denotes a field of characteristic zero and $A$ a
commutative associative $K$-algebra.

\begin{definition}
An \emph{$n$-Lie algebra} is a $K$-vector space $L$ endowed with an
$n$-linear alternating bracket
\[
[\,\cdot\,,\ldots,\cdot\,]:\underbrace{L\times\cdots\times L}_{n}\longrightarrow L
\]
satisfying the Filippov identity \cite{Fil}
\[
[x_1,\ldots,x_{n-1},[y_1,\ldots,y_n]]
=\sum_{i=1}^{n}[y_1,\ldots,[x_1,\ldots,x_{n-1},y_i],\ldots,y_n].
\]
\end{definition}

Alternating means that the bracket vanishes as soon as two of its
arguments coincide, or equivalently (since $\operatorname{char}K=0$) that
it changes sign under every transposition of two arguments. The bracket
is therefore, first and foremost, a map defined on the Cartesian product
$L\times\cdots\times L$, not on an exterior power. However, because it is
alternating, it factors uniquely, by the universal property of the
exterior power, through a $K$-linear map
$\bigwedge_K^n L\to L$,
for which we keep the same notation $[\,\cdot\,,\ldots,\cdot\,]$ by a
standard abuse. Every wedge symbol appearing below in connection with the
bracket refers to this induced map on $\bigwedge_K$, and never to a
redefinition of the bracket itself.

An element $X=x_1\wedge\cdots\wedge x_{n-1}\in\bigwedge_K^{n-1}L$ will be
called a \emph{fundamental element}. For fixed $z\in L$, the map
$(x_1,\ldots,x_{n-1})\mapsto[x_1,\ldots,x_{n-1},z]$ is alternating in
$x_1,\ldots,x_{n-1}$, hence it too factors through $\bigwedge_K^{n-1}L$.
We may therefore write, without ambiguity,
\[
[X,z]=[x_1,\ldots,x_{n-1},z],
\]
the right-hand side being independent of the particular decomposition
chosen for $X$.

For fundamental elements $X=x_1\wedge\cdots\wedge x_{n-1}$,
$Y=y_1\wedge\cdots\wedge y_{n-1}$, define
\[
X\circ Y=\sum_{i=1}^{n-1}y_1\wedge\cdots\wedge[X,y_i]\wedge\cdots\wedge y_{n-1}.
\]
The Filippov identity implies $[\ad_X,\ad_Y]=\ad_{X\circ Y}$.

\begin{remark}[Notation]
\label{rem:notation}
Once $L$ carries in addition an $A$-module structure, the anchor $\rho$
below is required to be $A$-multilinear in the arguments of a fundamental
element (axioms~(iii)--(iv)); it therefore factors through the finer
quotient $\Aext{n-1}L$, image of $\bigwedge_K^{n-1}L$ under the surjection
that additionally imposes $A$-bilinearity. The bracket
$[\,\cdot\,,\ldots,\cdot\,]$ itself is \emph{not} required to be
$A$-multilinear in the first $n-1$ slots — only $K$-multilinear — and so
is genuinely a map on $\bigwedge_K^{n-1}L$. Throughout the paper we
therefore write $\rho(X)$ for $X\in\Aext{n-1}L$, and $[X,z]$ for a chosen
decomposable representative $x_1\wedge\cdots\wedge x_{n-1}$ of the image of
$X$ under $\bigwedge_K^{n-1}L\twoheadrightarrow\Aext{n-1}L$; all the
identities proved below involve only decomposable fundamental elements, so
this convention introduces no ambiguity.
\end{remark}

\begin{definition}
An \emph{$n$-Lie--Rinehart algebra} over $A$ is a quadruple
$(L,A,[\,\cdot\,,\ldots,\cdot\,],\rho)$ such that
\begin{enumerate}
\item[(i)] $L$ is an $A$-module and an $n$-Lie algebra over $K$;
\item[(ii)] $\rho:\Aext{n-1}L\to\Der_K(A)$ is $K$-linear and compatible
with the bracket $\circ$ defined above, in the sense that
\[
[\rho(X),\rho(Y)]=\rho(X\circ Y)\qquad\text{for all }X,Y\in\Aext{n-1}L;
\]
equivalently, $\rho$ is a representation, by derivations of $A$, of the
Lie algebra that $\Aext{n-1}L$ inherits from the $n$-Lie bracket of $L$ via
the construction of \cite{BOO};
\item[(iii)] $\rho(ax_1,x_2,\ldots,x_{n-1})=a\,\rho(x_1,\ldots,x_{n-1})$;
\item[(iv)] $[x_1,\ldots,x_{n-1},ay]=a[x_1,\ldots,x_{n-1},y]
+\rho(x_1,\ldots,x_{n-1})(a)\,y$.
\end{enumerate}
Since $\rho$ is alternating, the $A$-linearity required in (iii) for the
first argument automatically extends, up to sign, to every argument; we
use this extended $A$-multilinearity of $\rho$ freely below.
\end{definition}

\section{The anchor radical}
\label{sec:radical}

\begin{definition}
Let $(L,A,[\,\cdot\,,\ldots,\cdot\,],\rho)$ be an $n$-Lie--Rinehart
algebra. The \emph{anchor radical} is
\[
\Rad(\rho)=\{u\in L:\rho(u,x_2,\ldots,x_{n-1})=0\ \forall\, x_2,\ldots,x_{n-1}\in L\}.
\]
Equivalently, $u\in\Rad(\rho)\iff u\wedge\Aext{n-2}L\subseteq\ker\rho$.
\end{definition}

This immediately distinguishes the anchor radical from the ordinary
kernel $\ker\rho\subseteq\Aext{n-1}L$.

\begin{proposition}
The anchor radical $\Rad(\rho)$ is an $A$-submodule of $L$.
\end{proposition}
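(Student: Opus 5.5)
The plan is to check the submodule axioms directly from the definition of $\Rad(\rho)$, using only that $\rho$ is $K$-linear and $A$-multilinear in its arguments. This multilinearity is axiom~(iii), extended to every slot by the alternating property, as noted after the definition of an $n$-Lie--Rinehart algebra. Concretely, $\rho$ factors through $\Aext{n-1}L$, so the map
\[
(x_1,\ldots,x_{n-1})\longmapsto \rho(x_1,\ldots,x_{n-1})
\]
is additive in each argument and satisfies $\rho(ax_1,x_2,\ldots,x_{n-1})=a\,\rho(x_1,\ldots,x_{n-1})$. Here the right-hand side is the derivation $b\mapsto a\,\rho(x_1,\ldots,x_{n-1})(b)$, i.e.\ the $A$-module structure on $\Der_K(A)$.

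First, $0\in\Rad(\rho)$, since $\rho(0,x_2,\ldots,x_{n-1})=0$ by additivity in the first slot.

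Next, fix $u,v\in\Rad(\rho)$, $a\in A$, and arbitrary $x_2,\ldots,x_{n-1}\in L$. Additivity in the first argument gives
\[
\rho(u+v,x_2,\ldots,x_{n-1})=\rho(u,x_2,\ldots,x_{n-1})+\rho(v,x_2,\ldots,x_{n-1})=0.
\]
Axiom~(iii) gives
\[
\rho(au,x_2,\ldots,x_{n-1})=a\,\rho(u,x_2,\ldots,x_{n-1})=a\cdot 0=0.
\]
Since $x_2,\ldots,x_{n-1}$ were arbitrary, $u+v$ and $au$ both lie in $\Rad(\rho)$. Closure under $K$-scalars follows because $K\to A$ via $1_A$, or directly from $K$-linearity.

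One could alternatively phrase this through the equivalent description $u\in\Rad(\rho)\iff u\wedge\Aext{n-2}L\subseteq\ker\rho$. Then $\Rad(\rho)$ is the preimage of the $A$-submodule $\{\,\omega\in\Aext{n-1}L\,\}$-annihilator condition under the $A$-linear map $u\mapsto(\xi\mapsto\rho(u\wedge\xi))$. This requires $\ker\rho$ to be an $A$-submodule, which again is axiom~(iii).

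There is no real obstacle here. The only point needing care is that the additivity used in the first slot is legitimate for the anchor, as opposed to the bracket, which is not $A$-multilinear. This holds because $\rho$ is defined on $\Aext{n-1}L$, as explained in Remark~\ref{rem:notation}.
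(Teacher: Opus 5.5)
Your proof is correct and follows essentially the same route as the paper: closure of $\Rad(\rho)$ comes directly from the $A$-multilinearity of $\rho$ in its first slot, and the paper simply merges your two steps into the single computation $\rho(au+bv,x_2,\ldots,x_{n-1})=a\,\rho(u,\ldots)+b\,\rho(v,\ldots)=0$. Your optional alternative via $u\wedge\Aext{n-2}L\subseteq\ker\rho$ is garbled as written, but the argument does not need it.
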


\begin{proof}
Let $u,v\in\Rad(\rho)$, $a,b\in A$. For arbitrary $x_2,\ldots,x_{n-1}\in L$,
the $A$-multilinearity of the anchor gives
\[
\rho(au+bv,x_2,\ldots,x_{n-1})
=a\,\rho(u,x_2,\ldots,x_{n-1})+b\,\rho(v,x_2,\ldots,x_{n-1})=0.
\]
Hence $au+bv\in\Rad(\rho)$.
\end{proof}

\begin{proposition}
\label{prop:n=2}
For $n=2$, $\Rad(\rho)=\ker\rho$.
\end{proposition}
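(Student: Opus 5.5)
For $n=2$ the plan is simply to unwind the two definitions and observe that they say the same thing. The only point needing care is the convention for the empty list of extra arguments and for $\Aext{n-1}L$ when $n-1=1$. When $n=2$, the anchor is a map $\rho:\Aext{1}L\to\Der_K(A)$. Since $\Aext{1}L=L$ canonically (the exterior power of degree one is the module itself), $\ker\rho$ is literally a subset of $L$. Likewise, a fundamental element $x_1\wedge\cdots\wedge x_{n-1}$ is just a single element $x_1\in L$.

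Next, I would specialize the definition of $\Rad(\rho)$. The condition ``$\rho(u,x_2,\ldots,x_{n-1})=0$ for all $x_2,\ldots,x_{n-1}\in L$'' has, for $n=2$, an empty family of quantified variables $x_2,\ldots,x_{n-1}$. It therefore reduces to the single condition $\rho(u)=0$. Hence
\[
\Rad(\rho)=\{u\in L:\rho(u)=0\}=\ker\rho .
\]

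To make this robust, I would also check it against the equivalent reformulation $u\in\Rad(\rho)\iff u\wedge\Aext{n-2}L\subseteq\ker\rho$. Here $\Aext{0}L=A$, so the condition reads $\rho(au)=0$ for all $a\in A$. By the $A$-linearity axiom (iii) this is $a\rho(u)=0$ for all $a$, and taking $a=1$ shows it is equivalent to $\rho(u)=0$. So both formulations give $\ker\rho$.

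The only potential obstacle is this bookkeeping about degenerate index ranges and $\Aext{0}L=A$. I would settle it by stating the convention explicitly and using the unit of $A$ as above. No use of the representation identity (ii) or the Leibniz rule (iv) is needed.
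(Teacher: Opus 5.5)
Your proof is correct and matches the paper's: for $n=2$ the anchor is a map $\rho:L\to\Der_K(A)$, the list of extra arguments in the definition of $\Rad(\rho)$ is empty, and the defining condition reduces to $\rho(u)=0$. Your extra check through $u\wedge\Aext{0}L=Au$ and axiom (iii) is a valid consistency verification that the paper does not include.
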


\begin{proof}
When $n=2$, the anchor has the form $\rho:L\to\Der_K(A)$; there are no
additional arguments in the definition of $\Rad(\rho)$, so
$u\in\Rad(\rho)\iff\rho(u)=0$.
\end{proof}

We now settle, unconditionally, the question left open in earlier drafts
of this circle of ideas of whether $\Rad(\rho)$ is stable under the
adjoint action.

\begin{proposition}[Adjoint stability]
\label{prop:adjoint-stable}
For every $n$-Lie--Rinehart algebra,
\[
[L,\ldots,L,\Rad(\rho)]\subseteq\Rad(\rho).
\]
\end{proposition}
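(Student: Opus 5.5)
The plan is to use only axiom (ii), the representation identity $[\rho(X),\rho(Y)]=\rho(X\circ Y)$, with a well-chosen second fundamental element $Y$. Fix $u\in\Rad(\rho)$ and a decomposable fundamental element $X=x_1\wedge\cdots\wedge x_{n-1}$. Since $\Rad(\rho)$ is an $A$-submodule (hence in particular a $K$-subspace), and the bracket is $K$-linear in its first $n-1$ slots, it suffices to show $[X,u]\in\Rad(\rho)$ for such $X$. By definition of the radical, this means showing
\[
\rho([X,u],y_2,\ldots,y_{n-1})=0\qquad\text{for all }y_2,\ldots,y_{n-1}\in L .
\]

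I will take $Y=u\wedge y_2\wedge\cdots\wedge y_{n-1}$. Because $u\in\Rad(\rho)$, we have $\rho(Y)=0$. Hence the left side of (ii) is $[\rho(X),\rho(Y)]=[\rho(X),0]=0$, and therefore $\rho(X\circ Y)=0$. Next I will expand $X\circ Y$ by its definition:
\[
X\circ Y=[X,u]\wedge y_2\wedge\cdots\wedge y_{n-1}
+\sum_{i=2}^{n-1}u\wedge y_2\wedge\cdots\wedge[X,y_i]\wedge\cdots\wedge y_{n-1}.
\]
Apply $\rho$ and use $K$-linearity. Every term in the sum still contains $u$ as its first factor, so it is killed by $\rho$ because $u\in\Rad(\rho)$. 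Only the first term survives, which gives $\rho([X,u],y_2,\ldots,y_{n-1})=0$, as required. For $n=2$ the sum is empty and the same argument reads $\rho([X,u])=\rho(X\circ u)=[\rho(X),\rho(u)]=0$, consistent with Proposition~\ref{prop:n=2}.

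The only point needing care is the passage between $\bigwedge_K^{n-1}L$ and $\Aext{n-1}L$. Axiom (ii) is formulated on $\Aext{n-1}L$, while $\circ$ is computed from the bracket, which is only $K$-multilinear in the first $n-1$ slots. I will handle this as in Remark~\ref{rem:notation}. Both $X$ and $Y$ are taken decomposable, and $X\circ Y$ is computed on these chosen representatives. The identity (ii) is then applied to these representatives, and the resulting expression is projected to $\Aext{n-1}L$ before evaluating $\rho$. Each summand is a decomposable element, and $\rho$ is alternating and $A$-multilinear. Hence the vanishing of a summand containing $u$ in some slot follows from the definition of $\Rad(\rho)$ after moving $u$ to the first position, at the cost of a sign. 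I expect this bookkeeping to be the main (mild) obstacle. Beyond that, no Leibniz correction terms from axiom (iv) ever enter, so the argument is unconditional.
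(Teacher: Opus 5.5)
Your proposal is correct and is essentially the paper's own proof: you take $Z=u\wedge z_2\wedge\cdots\wedge z_{n-1}$ with $\rho(Z)=0$ and use axiom (ii) to get $\rho(X\circ Z)=0$. You then note that every summand of $X\circ Z$ other than $[X,u]\wedge z_2\wedge\cdots\wedge z_{n-1}$ still contains $u$ and so is killed by $\rho$. Your bookkeeping about representatives in $\bigwedge_K^{n-1}L$ versus $\Aext{n-1}L$ matches the convention of Remark~\ref{rem:notation}, which the paper uses implicitly.
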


\begin{proof}
Let $u\in\Rad(\rho)$ and let $X=x_1\wedge\cdots\wedge x_{n-1}\in\Aext{n-1}L$
be arbitrary. Fix $z_2,\ldots,z_{n-1}\in L$ and set
$Z=u\wedge z_2\wedge\cdots\wedge z_{n-1}$; by definition of $\Rad(\rho)$,
$\rho(Z)=0$. Axiom (ii) gives
\[
[\rho(X),\rho(Z)]=\rho(X\circ Z),\qquad\text{hence } \rho(X\circ Z)=0.
\]
By definition of $\circ$,
\[
X\circ Z=[X,u]\wedge z_2\wedge\cdots\wedge z_{n-1}
+\sum_{k=2}^{n-1}u\wedge z_2\wedge\cdots\wedge[X,z_k]\wedge\cdots\wedge z_{n-1}.
\]
Every term in the sum still contains $u$ as a factor, hence has vanishing
image under $\rho$, since $u\in\Rad(\rho)$ and $\rho$ is alternating.
Therefore
\[
\rho\bigl([X,u]\wedge z_2\wedge\cdots\wedge z_{n-1}\bigr)=0
\]
for all $z_2,\ldots,z_{n-1}\in L$, i.e.\ $[X,u]\in\Rad(\rho)$.
\end{proof}

Set $V=\Rad(\rho)$. By Proposition~\ref{prop:adjoint-stable}, we may define,
for every $X\in\Aext{n-1}L$ and $u\in V$,
\[
\nabla^{\mathrm{ad}}_X u=[X,u]\in V.
\]

\begin{remark}[$A$-multilinearity of the restricted bracket]
\label{rem:A-multilinear}
If $x_1,\ldots,x_{n-1},y\in V$ and $a\in A$, then axiom (iv) gives
$[x_1,\ldots,x_{n-1},ay]=a[x_1,\ldots,x_{n-1},y]+\rho(x_1,\ldots,x_{n-1})(a)y$;
since $x_1\in\Rad(\rho)$, the definition of the anchor radical forces
$\rho(x_1,\ldots,x_{n-1})=0$ regardless of the remaining arguments, so the
Leibniz correction term vanishes identically. Hence the $n$-Lie bracket of
$L$, restricted to $V=\Rad(\rho)$, is $A$-multilinear, and
$(V,A,[\,\cdot\,,\ldots,\cdot\,])$ is an $n$-Lie algebra over $A$.
\end{remark}

\begin{proposition}
For every $X\in\Aext{n-1}L$, $a\in A$ and $u\in V$,
\[
\nabla^{\mathrm{ad}}_X(au)=a\,\nabla^{\mathrm{ad}}_X u+\rho(X)(a)\,u.
\]
\end{proposition}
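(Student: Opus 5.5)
The identity is essentially axiom (iv) rewritten in the notation $\nabla^{\mathrm{ad}}$, so the proof should be a direct unwinding of definitions. First I will check that both sides make sense as elements of $V$. Since $V=\Rad(\rho)$ is an $A$-submodule of $L$, we have $au\in V$ for $u\in V$. Proposition~\ref{prop:adjoint-stable} then guarantees that $\nabla^{\mathrm{ad}}_X(au)=[X,au]$ and $\nabla^{\mathrm{ad}}_X u=[X,u]$ lie in $V$. The right-hand side is an $A$-linear combination of elements of $V$, so it lies in $V$ as well.

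Next I will take $X$ decomposable, $X=x_1\wedge\cdots\wedge x_{n-1}$, following the convention of Remark~\ref{rem:notation} that $[X,z]$ is computed on a chosen decomposable representative. Applying axiom (iv) with $y=u$ gives
\[
[x_1,\ldots,x_{n-1},au]=a[x_1,\ldots,x_{n-1},u]+\rho(x_1,\ldots,x_{n-1})(a)\,u .
\]
Rewriting each term in the notation $\nabla^{\mathrm{ad}}_X$ and $\rho(X)$ yields exactly the claimed formula. Unlike in Remark~\ref{rem:A-multilinear}, the correction term is not expected to vanish here, because the $x_i$ range over all of $L$ rather than over $V$.

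For a general, non-decomposable $X\in\Aext{n-1}L$, I will extend the identity by $K$-linearity. Both sides are $K$-linear in $X$: the left side because the bracket factors through $\bigwedge_K^{n-1}L$, and the right side because $\rho$ is $K$-linear. Writing $X$ as a finite sum of decomposables and summing the identity over the terms settles the general case.

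The only delicate point is this passage to general $X$. The map $X\mapsto[X,z]$ is defined on $\bigwedge_K^{n-1}L$, not on $\Aext{n-1}L$, so the left side depends on the chosen representative. I will handle this by fixing a lift $\tilde X\in\bigwedge_K^{n-1}L$ and proving the identity for $\tilde X$. Its right-hand side involves only $\rho$ of the image of $\tilde X$ in $\Aext{n-1}L$, which is $\rho(X)$, so the identity holds for every choice of lift.
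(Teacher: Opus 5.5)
Your proof is correct and matches the paper's: apply axiom (iv) to $[X,au]$ and rewrite the result as $a\,\nabla^{\mathrm{ad}}_X u+\rho(X)(a)\,u$. Your extra checks are sound refinements of the convention in Remark~\ref{rem:notation}, which the paper leaves implicit: that $au\in V$, and that the identity holds for every lift $\tilde X\in\bigwedge_K^{n-1}L$ by $K$-linearity.
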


\begin{proof}
By definition, $\nabla^{\mathrm{ad}}_X(au)=[X,au]$, and the Lie--Rinehart
Leibniz identity (axiom (iv)) gives
$[X,au]=a[X,u]+\rho(X)(a)u=a\,\nabla^{\mathrm{ad}}_X u+\rho(X)(a)u$.
\end{proof}

\begin{theorem}
\label{thm:flat}
The adjoint action on $V$ is flat:
$[\nabla^{\mathrm{ad}}_X,\nabla^{\mathrm{ad}}_Y]=\nabla^{\mathrm{ad}}_{X\circ Y}$.
\end{theorem}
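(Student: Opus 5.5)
The claim $[\nabla^{\mathrm{ad}}_X,\nabla^{\mathrm{ad}}_Y]=\nabla^{\mathrm{ad}}_{X\circ Y}$ is an identity of $K$-linear endomorphisms of $V$. The plan is to check it on an arbitrary $u\in V$ and reduce it to the Filippov identity in $L$. Three things need arranging first.

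First, by Proposition~\ref{prop:adjoint-stable}, both $[X,u]$ and $[Y,u]$ lie in $V$. Hence the composites $\nabla^{\mathrm{ad}}_X\nabla^{\mathrm{ad}}_Y u=[X,[Y,u]]$ and $\nabla^{\mathrm{ad}}_Y\nabla^{\mathrm{ad}}_X u=[Y,[X,u]]$ are defined and are computed in $L$.

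Second, $X\circ Y$ is a sum of decomposable fundamental elements. Since $\nabla^{\mathrm{ad}}$ is $K$-linear in its subscript, $\nabla^{\mathrm{ad}}_{X\circ Y}u$ is the corresponding sum of brackets with $u$.

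Third, we may take $X=x_1\wedge\cdots\wedge x_{n-1}$ and $Y=y_1\wedge\cdots\wedge y_{n-1}$ decomposable, using representatives as in Remark~\ref{rem:notation}. The general case then follows by $K$-bilinearity of both sides in $(X,Y)$.

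The key step is the Filippov identity with $x_1,\ldots,x_{n-1}$ acting on the bracket $[y_1,\ldots,y_{n-1},u]$:
\[
[X,[Y,u]]=\sum_{i=1}^{n-1}[y_1,\ldots,[X,y_i],\ldots,y_{n-1},u]+[Y,[X,u]].
\]
Rearranging gives
\[
[X,[Y,u]]-[Y,[X,u]]=\sum_{i=1}^{n-1}[y_1\wedge\cdots\wedge[X,y_i]\wedge\cdots\wedge y_{n-1},u]=[X\circ Y,u],
\]
which is exactly $\nabla^{\mathrm{ad}}_{X\circ Y}u$. This is the statement $[\ad_X,\ad_Y]=\ad_{X\circ Y}$ recalled in Section~\ref{sec:prelim}, now restricted to the invariant subspace $V$.

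The main subtlety is well-definedness, not the computation. The operator $\nabla^{\mathrm{ad}}_X$ was indexed by $X\in\Aext{n-1}L$, but the bracket only factors through $\bigwedge_K^{n-1}L$. So I would check that $[X,u]$ for $u\in V$ does not depend on the chosen $K$-representative. It suffices to show that the bracket with $u\in V$ is $A$-balanced in the first $n-1$ slots, i.e.\ that
\[
[ax_1,x_2,\ldots,x_{n-1},u]=[x_1,ax_2,\ldots,x_{n-1},u].
\]
I would prove this from axiom (iv) and alternation, moving $u$ into the slot that carries the Leibniz correction. Each correction term then has the form $\rho(\ldots,u,\ldots)(a)\,x_j$, and these vanish because $u\in\Rad(\rho)$. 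With that in hand, $X\circ Y$ (computed from any representatives) acts on $V$ unambiguously, and the Filippov computation above completes the proof.
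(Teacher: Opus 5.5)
Your proof is correct and is essentially the paper's argument: adjoint stability (Proposition~\ref{prop:adjoint-stable}) makes the composites defined on $V$, and the Filippov identity, in the form $[\ad_X,\ad_Y]=\ad_{X\circ Y}$ restricted to $V$, gives the claim. Your well-definedness check goes beyond the paper, which relies only on the convention of Remark~\ref{rem:notation}, and it is sound: it is Lemma~\ref{lem:bracket-shift} with $\rho(X_i(u))=0$ for $u\in V$. One nuance: the class of $X\circ Y$ in $\Aext{n-1}L$ can itself depend on the representative chosen for $X$, so it is the Filippov identity (valid for any representatives), together with your balancing lemma, that makes $[X\circ Y,u]$ representative-independent, not the balancing lemma alone.
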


\begin{proof}
Let $u\in V$. Since $V$ is adjoint-stable (Proposition~\ref{prop:adjoint-stable}),
$[Y,u]\in V$ and $[X,u]\in V$, so all of the following are defined. We
compute
\[
[\nabla^{\mathrm{ad}}_X,\nabla^{\mathrm{ad}}_Y]u
=[X,[Y,u]]-[Y,[X,u]].
\]
The identity $[\ad_X,\ad_Y]=\ad_{X\circ Y}$ (Section~\ref{sec:prelim})
gives $[X,[Y,u]]-[Y,[X,u]]=[X\circ Y,u]$, hence
$[\nabla^{\mathrm{ad}}_X,\nabla^{\mathrm{ad}}_Y]u=\nabla^{\mathrm{ad}}_{X\circ Y}u$
for every $u\in V$.
\end{proof}

\section{Orthogonal structures and metric rigidity}
\label{sec:rigidity}

\begin{definition}
\label{def:orthogonal}
Let $(L,A,[\,\cdot\,,\ldots,\cdot\,],\rho)$ be an $n$-Lie--Rinehart algebra
and $V=\Rad(\rho)$. An \emph{orthogonal structure} on $L$ is a symmetric
$A$-bilinear form $g:V\times V\to A$ such that
\begin{enumerate}
\item[(i)] $g$ is nondegenerate;
\item[(ii)] for every $X\in\Aext{n-1}L$, $u,v\in V$,
\[
\rho(X)g(u,v)=g([X,u],v)+g(u,[X,v]).
\]
\end{enumerate}
The pair $(L,g)$ is called an \emph{orthogonal $n$-Lie--Rinehart algebra}.
\end{definition}

\begin{remark}
If $V$ is finitely generated projective, nondegeneracy may be formulated
by requiring $g^\flat:V\to V^{*}$, $u\mapsto g(u,\cdot)$, to be an
isomorphism.
\end{remark}

\begin{theorem}
Let $(L,g)$ be an orthogonal $n$-Lie--Rinehart algebra. Then the adjoint
action on $\Rad(\rho)$ is both flat and metric.
\end{theorem}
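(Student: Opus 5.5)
The statement has two halves, and I will treat them separately, since each reduces to results already in hand.

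\emph{Flatness.} This is Theorem~\ref{thm:flat} verbatim. Proposition~\ref{prop:adjoint-stable} guarantees that $\nabla^{\mathrm{ad}}_X$ is a well-defined operator $V\to V$. Theorem~\ref{thm:flat} then gives $[\nabla^{\mathrm{ad}}_X,\nabla^{\mathrm{ad}}_Y]=\nabla^{\mathrm{ad}}_{X\circ Y}$ for all $X,Y\in\Aext{n-1}L$. Nothing about $g$ enters here, so I will simply cite that theorem.

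\emph{Metricity.} I will interpret ``metric'' in the usual connection-theoretic sense: for all $X\in\Aext{n-1}L$ and $u,v\in V$,
\[
\rho(X)\,g(u,v)=g(\nabla^{\mathrm{ad}}_X u,v)+g(u,\nabla^{\mathrm{ad}}_X v).
\]
Substituting $\nabla^{\mathrm{ad}}_X u=[X,u]$ turns this into exactly condition~(ii) of Definition~\ref{def:orthogonal}, so it holds by hypothesis.

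To make the statement meaningful rather than tautological, I will add a consistency check: the compatibility identity is well behaved under the $A$-module structure of $V$. Replace $u$ by $au$ with $a\in A$. By the Leibniz rule for $\nabla^{\mathrm{ad}}$ (the proposition preceding Theorem~\ref{thm:flat}), the right-hand side becomes
\[
a\,g([X,u],v)+\rho(X)(a)\,g(u,v)+a\,g(u,[X,v]).
\]
The left-hand side is
\[
\rho(X)\bigl(a\,g(u,v)\bigr)=\rho(X)(a)\,g(u,v)+a\,\rho(X)g(u,v),
\]
because $\rho(X)$ is a derivation. The two sides therefore agree exactly when the identity holds for $u$. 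So the metric condition is compatible with the Leibniz rule in the variables $u,v$ and is consistent on $V$.

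The only real subtlety is the fundamental direction $X$: replacing a factor $x_i$ by $a x_i$ is the source of the obstruction treated later in the paper. Since condition~(ii) is imposed for every $X\in\Aext{n-1}L$, with $[X,\cdot\,]$ computed on a decomposable representative as in Remark~\ref{rem:notation}, this causes no difficulty here. I expect this well-definedness point to be the only delicate step, and I will handle it by noting that the hypothesis is stated for all decomposable representatives.
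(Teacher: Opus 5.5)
Your proposal is correct and follows the paper's own proof: the paper likewise cites Theorem~\ref{thm:flat} for flatness, and observes that metricity is condition~(ii) of Definition~\ref{def:orthogonal} rewritten with $\nabla^{\mathrm{ad}}_Xu=[X,u]$. The consistency check in $u$ is unnecessary, and the representative-independence you flag as delicate does not need to be built into the hypothesis, because it holds outright on $V$: Lemma~\ref{lem:bracket-shift} together with Proposition~\ref{prop:radical-kills-defect} gives $[X_{a,i},u]=a[X,u]$ for $u\in\Rad(\rho)$, so $\nabla^{\mathrm{ad}}_Xu$ depends only on the class of $X$ in $\Aext{n-1}L$.
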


\begin{proof}
Metric compatibility is precisely condition (ii) of
Definition~\ref{def:orthogonal}, rewritten as
$\rho(X)g(u,v)=g(\nabla^{\mathrm{ad}}_X u,v)+g(u,\nabla^{\mathrm{ad}}_X v)$.
Flatness is Theorem~\ref{thm:flat}.
\end{proof}

\begin{proposition}[The binary case]
For $n=2$, the orthogonality condition becomes
\[
\rho(x)g(u,v)=g([x,u],v)+g(u,[x,v]),\qquad u,v\in\ker\rho.
\]
\end{proposition}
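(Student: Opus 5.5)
The statement is a direct specialization of Definition~\ref{def:orthogonal}, so the plan is to substitute $n=2$ into each ingredient of that definition and observe what the objects become. No new identity has to be established.

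First, identify the domain. For $n=2$ we have $\Aext{n-1}L=\Aext{1}L=L$, so a fundamental element is a single element $x\in L$. The anchor is then $\rho:L\to\Der_K(A)$. By Proposition~\ref{prop:n=2}, the anchor radical is $V=\Rad(\rho)=\ker\rho$, and this is the set on which $g$ is defined. Hence the variables $u,v$ in condition (ii) range over $\ker\rho$.

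Second, rewrite the bracket and the anchor term. For $X=x\in L$, the notation $[X,u]=[x_1,\ldots,x_{n-1},u]$ with $n-1=1$ is just the binary bracket $[x,u]$, and likewise $[X,v]=[x,v]$. The term $\rho(X)g(u,v)$ is $\rho(x)$ applied to $g(u,v)\in A$. Substituting these into condition (ii) of Definition~\ref{def:orthogonal} gives exactly the displayed identity, for all $x\in L$ and $u,v\in\ker\rho$.

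The only subtle point is a consistency check: the identity should make sense on $\ker\rho$. Proposition~\ref{prop:adjoint-stable} with $n=2$ gives $[x,u]\in\ker\rho$ for $u\in\ker\rho$, so $g([x,u],v)$ and $g(u,[x,v])$ are defined. I expect no real obstacle. One may also remark that the left-hand side need not vanish: $x$ ranges over all of $L$, not only over $\ker\rho$, so $\rho(x)$ can be nonzero even though $u,v\in\ker\rho$.
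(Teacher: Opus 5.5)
Your proposal is correct and follows the paper's own argument: use Proposition~\ref{prop:n=2} to identify $\Rad(\rho)$ with $\ker\rho$, note that a fundamental element is now a single $x\in L$, and substitute into condition (ii). Your extra well-definedness check via Proposition~\ref{prop:adjoint-stable} is valid at $n=2$, since with $Z=u$ the sum over $k$ is empty and $\rho([x,u])=[\rho(x),\rho(u)]=0$; the paper leaves this point implicit.
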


\begin{proof}
By Proposition~\ref{prop:n=2}, $\Rad(\rho)=\ker\rho$; a fundamental element
is now simply an element $x\in L$, and substitution gives the result.
\end{proof}

\begin{proposition}[Vanishing anchor]
Suppose $\rho=0$. Then $\Rad(\rho)=L$, and the orthogonality condition is
equivalent to
\[
g([x_1,\ldots,x_{n-1},u],v)+g(u,[x_1,\ldots,x_{n-1},v])=0.
\]
\end{proposition}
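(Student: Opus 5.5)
The plan is to unwind the definitions directly; the statement has two parts, and each is a short verification.

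For the first claim, I will show $\Rad(\rho)=L$ when $\rho=0$. Let $u\in L$ be arbitrary. For all $x_2,\ldots,x_{n-1}\in L$ we have $\rho(u,x_2,\ldots,x_{n-1})=0$ because $\rho$ is identically zero. So $u$ satisfies the defining condition of the anchor radical. Hence $L\subseteq\Rad(\rho)$, and the reverse inclusion is trivial. Consequently $V=L$ in Definition~\ref{def:orthogonal}, and $g$ is a symmetric $A$-bilinear form on all of $L$.

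For the equivalence, I substitute $\rho=0$ into condition (ii) of Definition~\ref{def:orthogonal}. The left-hand side $\rho(X)g(u,v)$ is the zero derivation applied to $g(u,v)$, so it vanishes for every $X\in\Aext{n-1}L$. Condition (ii) therefore reads $g([X,u],v)+g(u,[X,v])=0$ for all fundamental $X$ and all $u,v\in L$.

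It remains to pass between arbitrary $X$ and decomposable ones $X=x_1\wedge\cdots\wedge x_{n-1}$. By the convention of Remark~\ref{rem:notation}, $[X,z]$ is always evaluated on decomposable representatives, and the expression is $K$-linear in $X$. So condition (ii) for all $X$ is equivalent to the displayed identity for all $x_1,\ldots,x_{n-1}\in L$: one direction is specialization, and the other follows by $K$-linear extension over sums of decomposables. Nondegeneracy (i) is untouched on both sides.

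The only point needing care is this reduction to decomposable elements, and it is settled by the notational convention already fixed in the paper. I anticipate no genuine obstacle.
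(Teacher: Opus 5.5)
Your proof is correct and follows essentially the same route as the paper: every $u\in L$ lies in $\Rad(\rho)$ because $\rho\equiv 0$, and substituting $\rho(X)=0$ into condition (ii) of Definition~\ref{def:orthogonal} leaves exactly the stated invariance identity. Your extra step from decomposable to arbitrary $X$ is a harmless refinement that the paper leaves implicit. It is in fact cleaner than you make it: with $\rho=0$, axiom (iv) together with alternation makes the bracket $A$-multilinear, so $[X,z]$ is genuinely well defined on $\Aext{n-1}L$.
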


\begin{proof}
If $\rho=0$, every element of $L$ lies in the anchor radical, so
$\Rad(\rho)=L$; the defining identity reduces to $0=g([X,u],v)+g(u,[X,v])$,
the usual invariance condition for a metric $n$-Lie algebra.
\end{proof}

Thus the construction contains metric $n$-Lie algebras as the zero-anchor
case \cite{JLZ}. We now examine what happens when one attempts, more
ambitiously, to equip the whole $A$-module $L$ — rather than just its
anchor radical — with an invariant metric.

Let $B:L\times L\to A$ be any symmetric $A$-bilinear form on $L$. For a
fundamental element $X=x_1\wedge\cdots\wedge x_{n-1}$, define the
\emph{orthogonality defect} of $B$ relative to $X$ by
\[
(D_XB)(u,v)=\rho(X)B(u,v)-B([X,u],v)-B(u,[X,v]).
\]
By construction, $D_XB$ vanishes precisely when $B$ satisfies, for this
fundamental element $X$, the invariance identity of the previous sections.

\begin{proposition}
\label{prop:tensorial-uv}
For fixed $X$, the map $D_XB:L\times L\to A$ is symmetric and
$A$-bilinear.
\end{proposition}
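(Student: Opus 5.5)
Symmetry follows at once from the definition. Since $B$ is symmetric, $\rho(X)B(u,v)=\rho(X)B(v,u)$. The two correction terms are interchanged when $u$ and $v$ are swapped: $B([X,u],v)+B(u,[X,v])=B(u,[X,v])+B([X,u],v)$ after applying symmetry of $B$ to each term. Hence $(D_XB)(u,v)=(D_XB)(v,u)$, and it suffices to prove $A$-linearity in the first argument $u$.

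Additivity in $u$ (indeed $K$-linearity) is immediate. The bracket is $K$-multilinear, $B$ is $A$-bilinear, and $\rho(X)$ is $K$-linear. The real content is the behaviour under $u\mapsto au$ for $a\in A$, and I would expand each of the three terms separately.

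First, because $\rho(X)$ is a derivation of $A$ and $B$ is $A$-bilinear,
\[
\rho(X)B(au,v)=\rho(X)\bigl(aB(u,v)\bigr)=\rho(X)(a)\,B(u,v)+a\,\rho(X)B(u,v).
\]
Second, the Leibniz axiom (iv) gives $[X,au]=a[X,u]+\rho(X)(a)\,u$. Therefore
\[
B([X,au],v)=a\,B([X,u],v)+\rho(X)(a)\,B(u,v).
\]
Third, $B(au,[X,v])=a\,B(u,[X,v])$ directly by $A$-bilinearity.

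Subtracting the second and third expansions from the first, the two anomalous terms $\rho(X)(a)B(u,v)$ cancel. This leaves $(D_XB)(au,v)=a\,(D_XB)(u,v)$. Linearity in $v$ then follows from symmetry.

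The only point needing care is that $[X,au]$ is computed from a chosen decomposable representative $x_1\wedge\cdots\wedge x_{n-1}$, as in Remark~\ref{rem:notation}, whereas $\rho(X)$ depends only on the class of $X$ in $\Aext{n-1}L$. Since $X$ is fixed throughout and axiom (iv) is stated for decomposable arguments, the same $\rho(x_1,\ldots,x_{n-1})$ appears in both the derivation term and the Leibniz term. This is exactly what makes the cancellation valid, and I expect this bookkeeping to be the only mild obstacle.
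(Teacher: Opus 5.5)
Your proof is correct and follows the paper's argument essentially verbatim. Symmetry comes from that of $B$. Then $A$-linearity in the first slot follows by expanding $\rho(X)B(au,v)$ via the derivation property and $[X,au]$ via axiom (iv), so that the two $\rho(X)(a)B(u,v)$ terms cancel; your remark about keeping a fixed decomposable representative of $X$ is a harmless extra precision the paper leaves implicit.
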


\begin{proof}
Symmetry follows from that of $B$; it suffices to establish $A$-linearity
in the first variable. Let $a\in A$. The $A$-bilinearity of $B$ gives
$B(au,v)=aB(u,v)$, and since $\rho(X)$ is a derivation of $A$,
\[
\rho(X)B(au,v)=\rho(X)(a)B(u,v)+a\,\rho(X)B(u,v).
\]
The Leibniz axiom gives $[X,au]=a[X,u]+\rho(X)(a)u$. Combining both
identities in the definition of $D_XB$,
\[
(D_XB)(au,v)=\rho(X)(a)B(u,v)+a\,\rho(X)B(u,v)
-aB([X,u],v)-\rho(X)(a)B(u,v)-aB(u,[X,v]),
\]
and the two occurrences of $\rho(X)(a)B(u,v)$ cancel, leaving
$(D_XB)(au,v)=a(D_XB)(u,v)$.
\end{proof}

The tensoriality just established concerns the metric variables $u,v$; it
does not extend automatically to the fundamental variable $X$. Fix
$X=x_1\wedge\cdots\wedge x_{n-1}$ and an index $1\le i\le n-1$. For
$a\in A$, write $X_{a,i}=x_1\wedge\cdots\wedge ax_i\wedge\cdots\wedge x_{n-1}$,
and for $u\in L$, $X_i(u)=x_1\wedge\cdots\wedge\widehat{x_i}\wedge\cdots
\wedge x_{n-1}\wedge u$.

\begin{lemma}
\label{lem:bracket-shift}
$[X_{a,i},u]=a[X,u]+(-1)^{n-i}\rho(X_i(u))(a)\,x_i$.
\end{lemma}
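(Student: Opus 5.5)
The plan is to move the coefficient $a$ out of the $i$-th slot by using the alternating property of the bracket to carry $ax_i$ into the last slot. There the Leibniz axiom (iv) applies directly. Afterwards the remaining factors are moved back to their original positions.

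Concretely, write $[X_{a,i},u]=[x_1,\ldots,ax_i,\ldots,x_{n-1},u]$. Swap the two arguments $ax_i$ and $u$; this is a single transposition and gives
\[
[X_{a,i},u]=-[x_1,\ldots,\underset{i}{u},\ldots,x_{n-1},ax_i].
\]
Now apply axiom (iv) with the fundamental element $x_1\wedge\cdots\wedge u\wedge\cdots\wedge x_{n-1}$, where $u$ sits in slot $i$, and with $y=x_i$:
\[
[x_1,\ldots,u,\ldots,x_{n-1},ax_i]=a[x_1,\ldots,u,\ldots,x_{n-1},x_i]+\rho(x_1,\ldots,u,\ldots,x_{n-1})(a)\,x_i .
\]
In the first term, swapping $u$ and $x_i$ back restores $[X,u]$ at the cost of another sign. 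Together with the leading minus sign this yields exactly $a[X,u]$.

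For the anchor term, I will rewrite $x_1\wedge\cdots\wedge u\wedge\cdots\wedge x_{n-1}$, with $u$ in position $i$, as $X_i(u)$. This means moving $u$ from position $i$ to the last position $n-1$. That takes $n-1-i$ adjacent transpositions, and since $\rho$ is alternating on $\Aext{n-1}L$ it contributes the sign $(-1)^{n-1-i}$. Combined with the leading minus sign, the total sign is $(-1)^{n-i}$, which gives the stated formula.

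The only real obstacle is sign bookkeeping. I will double-check it in the case $i=n-1$. There $X_i(u)$ coincides with the element obtained by placing $u$ in slot $n-1$, so the anchor sign is $1$, and the formula predicts $(-1)^{1}=-1$, which agrees with the single transposition. I will also note, as in Remark~\ref{rem:notation}, that all elements involved are decomposable, so no ambiguity arises from the quotient to $\Aext{n-1}L$.
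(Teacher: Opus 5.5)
Your proof is correct and follows the paper's argument: use alternation to bring $ax_i$ into the last slot, apply axiom (iv), and move everything back. The only difference is in how $ax_i$ is moved. You use a single transposition of slots $i$ and $n$ and then reorder the anchor argument into $X_i(u)$ at cost $(-1)^{n-1-i}$, whereas the paper shifts $ax_i$ cyclically to the end at cost $(-1)^{n-i}$, so its anchor argument is already $X_i(u)$; both sign bookkeepings give the same result.
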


\begin{proof}
The bracket $[x_1,\ldots,\widehat{x_i},\ldots,x_{n-1},u,ax_i]$ is obtained
from $[X_{a,i},u]$ by moving $ax_i$ from position $i$ to the last
position; since the bracket is alternating, this introduces the sign
$(-1)^{n-i}$:
\[
[X_{a,i},u]=(-1)^{n-i}[x_1,\ldots,\widehat{x_i},\ldots,x_{n-1},u,ax_i].
\]
Applying axiom (iv) to the last argument,
\[
[X_{a,i},u]=(-1)^{n-i}a[x_1,\ldots,\widehat{x_i},\ldots,x_{n-1},u,x_i]
+(-1)^{n-i}\rho(X_i(u))(a)\,x_i.
\]
Moving $x_i$ back to position $i$ costs another sign $(-1)^{n-i}$, and
$(-1)^{n-i}\cdot(-1)^{n-i}=1$, so the first term equals $a[X,u]$.
\end{proof}

\begin{theorem}[Fundamental tensoriality defect]
\label{thm:defect}
For every $a\in A$,
\[
(D_{X_{a,i}}B)(u,v)=a(D_XB)(u,v)
-(-1)^{n-i}\rho(X_i(u))(a)B(x_i,v)
-(-1)^{n-i}\rho(X_i(v))(a)B(x_i,u).
\]
\end{theorem}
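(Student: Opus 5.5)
The plan is a direct expansion of the three terms in the definition of $D_{X_{a,i}}B$, using Lemma~\ref{lem:bracket-shift} for the two bracket terms and axiom~(iii) for the anchor term. Everything reduces to $A$-bilinearity of $B$ and bookkeeping of signs.

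First, the anchor term. By the extended $A$-multilinearity of $\rho$ (axiom~(iii) together with alternation), $\rho(X_{a,i})=a\,\rho(X)$ as derivations of $A$. Note that here $a$ multiplies the derivation and is not differentiated, so
\[
\rho(X_{a,i})B(u,v)=a\,\rho(X)B(u,v).
\]

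Second, the two bracket terms. Lemma~\ref{lem:bracket-shift} applied to $u$ gives $[X_{a,i},u]=a[X,u]+(-1)^{n-i}\rho(X_i(u))(a)\,x_i$. By $A$-linearity of $B$ in its first slot,
\[
B([X_{a,i},u],v)=a\,B([X,u],v)+(-1)^{n-i}\rho(X_i(u))(a)\,B(x_i,v).
\]
Likewise, the lemma applied to $v$, together with symmetry of $B$, gives
\[
B(u,[X_{a,i},v])=a\,B(u,[X,v])+(-1)^{n-i}\rho(X_i(v))(a)\,B(x_i,u).
\]

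Finally, substitute these into $(D_{X_{a,i}}B)(u,v)=\rho(X_{a,i})B(u,v)-B([X_{a,i},u],v)-B(u,[X_{a,i},v])$. The three terms carrying a factor $a$ assemble into $a(D_XB)(u,v)$, and the two correction terms appear with minus signs, which is exactly the claimed formula.

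The only delicate point is the first step: we must ensure that the anchor term contributes no derivative of $a$. This holds because $\rho$ is $A$-linear in each argument of the fundamental element, in contrast with the bracket, which fails to be $A$-linear in its last slot.

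One further check: the well-definedness of $[X_{a,i},u]$ via decomposable representatives, as in Remark~\ref{rem:notation}. This is harmless, since the whole computation is carried out with the fixed decomposable representative $x_1\wedge\cdots\wedge ax_i\wedge\cdots\wedge x_{n-1}$.
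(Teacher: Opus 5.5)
Your proposal is correct and follows the paper's proof essentially line by line. Like the paper, you use $\rho(X_{a,i})=a\,\rho(X)$ from the $A$-multilinearity of the anchor, apply Lemma~\ref{lem:bracket-shift} to both $u$ and $v$ together with the $A$-bilinearity and symmetry of $B$, and then collect the $a$-terms into $a(D_XB)(u,v)$.
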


\begin{proof}
Since $\rho$ is $A$-linear in its fundamental argument,
$\rho(X_{a,i})=a\rho(X)$, so
\[
(D_{X_{a,i}}B)(u,v)=a\rho(X)B(u,v)-B([X_{a,i},u],v)-B(u,[X_{a,i},v]).
\]
By Lemma~\ref{lem:bracket-shift},
\[
B([X_{a,i},u],v)=aB([X,u],v)+(-1)^{n-i}\rho(X_i(u))(a)B(x_i,v),
\]
and, symmetrically, using $B(u,x_i)=B(x_i,u)$,
\[
B(u,[X_{a,i},v])=aB(u,[X,v])+(-1)^{n-i}\rho(X_i(v))(a)B(x_i,u).
\]
Substituting and collecting the terms in $a$ gives the stated formula, the
$a$-coefficient being exactly $(D_XB)(u,v)$.
\end{proof}

\begin{definition}
Set $\mathcal{O}_{B,i}(a;X)(u,v)=(D_{X_{a,i}}B)(u,v)-a(D_XB)(u,v)$.
\end{definition}

By Theorem~\ref{thm:defect},
\[
\mathcal{O}_{B,i}(a;X)(u,v)=-(-1)^{n-i}\rho(X_i(u))(a)B(x_i,v)
-(-1)^{n-i}\rho(X_i(v))(a)B(x_i,u).
\]
This tensor measures precisely the failure of $D_XB$ to be tensorial in
the fundamental variable; it vanishes identically as soon as the two
anchor terms composing it vanish.

\begin{corollary}
\label{cor:global-invariance}
If $B$ is globally invariant, i.e.\ $D_XB=0$ for every fundamental element
$X$, then
\[
\rho(X_i(u))(a)B(x_i,v)+\rho(X_i(v))(a)B(x_i,u)=0
\]
for every admissible choice of $X,i,u,v,a$.
\end{corollary}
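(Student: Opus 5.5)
The corollary follows directly from Theorem~\ref{thm:defect}. I will specialize it to a globally invariant $B$ and then cancel the common sign factor.

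Fix a fundamental element $X=x_1\wedge\cdots\wedge x_{n-1}$, an index $1\le i\le n-1$, elements $u,v\in L$ and $a\in A$. The element $X_{a,i}=x_1\wedge\cdots\wedge ax_i\wedge\cdots\wedge x_{n-1}$ is again a decomposable fundamental element. The hypothesis $D_YB=0$ for every fundamental $Y$ therefore applies both to $Y=X$ and to $Y=X_{a,i}$. This gives $(D_XB)(u,v)=0$ and $(D_{X_{a,i}}B)(u,v)=0$.

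Substituting these two vanishings into the formula of Theorem~\ref{thm:defect} makes the left-hand side and the term $a(D_XB)(u,v)$ both zero. Equivalently, the obstruction $\mathcal{O}_{B,i}(a;X)(u,v)$ vanishes, which leaves
\[
0=-(-1)^{n-i}\bigl(\rho(X_i(u))(a)B(x_i,v)+\rho(X_i(v))(a)B(x_i,u)\bigr).
\]
Multiplying by the unit $-(-1)^{n-i}$ yields the stated identity.

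The only point needing care is that global invariance is quantified over all fundamental elements, so it covers the rescaled element $X_{a,i}$ as well as $X$. By Remark~\ref{rem:notation}, the identities of Lemma~\ref{lem:bracket-shift} and Theorem~\ref{thm:defect} are stated for decomposable representatives, and $X_{a,i}$ is such a representative. Since $X,i,u,v,a$ were arbitrary, the corollary follows for every admissible choice.
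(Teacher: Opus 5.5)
Your proposal is correct and is essentially the paper's own argument: global invariance applied to both $X$ and $X_{a,i}$ makes $\mathcal{O}_{B,i}(a;X)(u,v)$ vanish via Theorem~\ref{thm:defect}, and you then cancel the unit sign $-(-1)^{n-i}$. You also state explicitly that $X_{a,i}$ is itself a decomposable representative covered by the hypothesis, which is a useful clarification given the convention of Remark~\ref{rem:notation}.
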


\begin{proof}
Global invariance forces both $D_XB=0$ and $D_{X_{a,i}}B=0$, hence
$\mathcal{O}_{B,i}(a;X)(u,v)=0$; remove the common sign
$-(-1)^{n-i}$.
\end{proof}

\begin{proposition}
\label{prop:radical-kills-defect}
Let $u\in\Rad(\rho)$. Then $\rho(X_i(u))=0$ for every fundamental tuple
$X$ and every index $i$.
\end{proposition}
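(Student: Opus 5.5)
The plan is to reduce the claim directly to the definition of $\Rad(\rho)$, using only that $\rho$ is alternating and $A$-multilinear on $\Aext{n-1}L$. Recall that
\[
X_i(u)=x_1\wedge\cdots\wedge\widehat{x_i}\wedge\cdots\wedge x_{n-1}\wedge u
\]
is a decomposable element of $\Aext{n-1}L$ with $u$ in the last slot. The definition of the anchor radical requires $u$ in the \emph{first} slot, so the only work is moving $u$ there.

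First, we transport $u$ from position $n-1$ to position $1$ by $n-2$ adjacent transpositions. Since $\rho$ factors through the exterior power, this gives
\[
\rho(X_i(u))=(-1)^{n-2}\,\rho\bigl(u,x_1,\ldots,\widehat{x_i},\ldots,x_{n-1}\bigr).
\]
The right-hand side has the form $\rho(u,z_2,\ldots,z_{n-1})$ with $(z_2,\ldots,z_{n-1})=(x_1,\ldots,\widehat{x_i},\ldots,x_{n-1})$, which is a tuple of $n-2$ elements of $L$. By the definition of $\Rad(\rho)$ this vanishes, for every choice of the tuple $X$ and every index $i$. Hence $\rho(X_i(u))=0$ as a derivation of $A$; in particular $\rho(X_i(u))(a)=0$ for all $a\in A$.

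The only point needing care is the degenerate case $n=2$. There $X=x_1$ and $i=1$ is forced, so $X_1(u)=u$, and the claim reads $\rho(u)=0$. This is exactly Proposition~\ref{prop:n=2}, where $\Rad(\rho)=\ker\rho$, and no sign bookkeeping is needed.

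There is no real obstacle. The one subtlety worth stating is that alternation of $\rho$ is legitimately applied on $\Aext{n-1}L$: by Remark~\ref{rem:notation} and axioms (iii)--(iv), $\rho$ is defined on this exterior power, so permuting the factors only changes the sign and cannot turn a zero into a nonzero value. The sign $(-1)^{n-2}$ is therefore irrelevant to the conclusion.
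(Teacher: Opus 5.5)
Your proposal is correct and follows the paper's approach. The paper's one-line proof observes that $X_i(u)$ contains $u$ as a factor and that $\rho$ vanishes on any fundamental element containing an element of $\Rad(\rho)$, which is exactly your argument: use alternation to move $u$ to the first slot (sign $(-1)^{n-2}$), then apply the definition. Your separate $n=2$ case is harmless but not needed, since the general argument already covers it with an empty tuple and trivial sign.
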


\begin{proof}
$X_i(u)$ contains $u$ as one of its $n-1$ factors, and $\rho$ vanishes on
any fundamental element containing an element of $\Rad(\rho)$.
\end{proof}

\begin{corollary}
\label{cor:vanish-on-radical}
If $u,v\in\Rad(\rho)$, then $\mathcal{O}_{B,i}(a;X)(u,v)=0$.
\end{corollary}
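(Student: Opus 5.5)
The plan is to substitute directly into the explicit formula for the obstruction tensor. By Theorem~\ref{thm:defect}, or equivalently the displayed expression following the definition of $\mathcal{O}_{B,i}$,
\[
\mathcal{O}_{B,i}(a;X)(u,v)=-(-1)^{n-i}\rho(X_i(u))(a)\,B(x_i,v)-(-1)^{n-i}\rho(X_i(v))(a)\,B(x_i,u).
\]
This holds for every fundamental tuple $X=x_1\wedge\cdots\wedge x_{n-1}$, every index $i$ and every $a\in A$. No further structural input on $B$ is needed; the only ingredient is that the anchor factors in both terms vanish.

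First, since $u\in\Rad(\rho)$, Proposition~\ref{prop:radical-kills-defect} gives $\rho(X_i(u))=0$ as a derivation of $A$. In particular $\rho(X_i(u))(a)=0$, and the first term vanishes whatever the value of $B(x_i,v)$. The same proposition applied to $v\in\Rad(\rho)$ gives $\rho(X_i(v))(a)=0$, so the second term vanishes too. Hence $\mathcal{O}_{B,i}(a;X)(u,v)=0$.

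The only point requiring care, and the nearest thing to an obstacle, is that $X_i(u)$ is written with $u$ in the last slot rather than the first. The definition of $\Rad(\rho)$ places $u$ in the first argument. Because $\rho$ is alternating, moving $u$ to the first slot changes $\rho(X_i(u))$ by at most a sign, so it still vanishes. This is exactly what Proposition~\ref{prop:radical-kills-defect} records, so invoking it suffices. I would also note that the vectors $x_j$ in $X$ are arbitrary elements of $L$ and need not lie in the radical; this is harmless, since only the membership of $u$ and $v$ is used.
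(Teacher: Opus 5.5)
Your proof is correct and is exactly the intended argument: the paper states this corollary without a separate proof, as an immediate consequence of the explicit formula for $\mathcal{O}_{B,i}(a;X)(u,v)$ from Theorem~\ref{thm:defect} together with Proposition~\ref{prop:radical-kills-defect}. That proposition kills both anchor factors $\rho(X_i(u))(a)$ and $\rho(X_i(v))(a)$, and your remark that moving $u$ from the last slot to the first only changes $\rho(X_i(u))$ by a sign is the right justification for it.
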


This corollary supplies the intrinsic explanation for placing the metric
on $\Rad(\rho)$ rather than imposing an unrestricted metric condition on
all of $L$: it is the only way to systematically kill the tensoriality
obstruction. The next theorem shows that this restriction is not merely
convenient but, under natural finiteness and nondegeneracy hypotheses,
\emph{necessary}.

\begin{theorem}[Metric rigidity]
\label{thm:rigidity}
Let $(L,A,[\,\cdot\,,\ldots,\cdot\,],\rho)$ be an $n$-Lie--Rinehart algebra
such that $L$ is a finitely generated projective $A$-module of rank
$\ge 2$ at every point of $\operatorname{Spec}A$. Let $B:L\times L\to A$ be
a symmetric, strongly nondegenerate $A$-bilinear form (i.e.\
$B^\flat:L\to L^{*}$ is an isomorphism) such that $D_XB=0$ for every
fundamental element $X\in\Aext{n-1}L$. Then $\rho\equiv 0$.
\end{theorem}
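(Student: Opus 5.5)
We start from Corollary~\ref{cor:global-invariance}: since $D_XB=0$ for every fundamental element, we have
\[
\rho(X_i(u))(a)\,B(x_i,v)+\rho(X_i(v))(a)\,B(x_i,u)=0
\]
for all $x_1,\ldots,x_{n-1},u,v\in L$, all $a\in A$ and all $i$. Write $D=\rho(x_1\wedge\cdots\wedge\widehat{x_i}\wedge\cdots\wedge x_{n-1}\wedge u)$. The goal is to show $D(a)=0$ for every $a$. By alternation and $A$-multilinearity of $\rho$, every value of $\rho$ on a decomposable element has this form for a suitable choice of the $x_j$ and $u$, so this gives $\rho\equiv0$.

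\textbf{Step 1: take $v=u$.} The identity becomes $2\,\rho(X_i(u))(a)\,B(x_i,u)=0$. Since $\operatorname{char}K=0$, this gives $\rho(X_i(u))(a)\,B(x_i,u)=0$. We would rather pair the anchor term with an arbitrary $B(x_i,v)$. Polarizing in $u$ (replacing $u$ by $u+v$) only recovers the original identity, so we use the symmetric identity differently.

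\textbf{Step 2: decouple $x_i$.} The quantity $\rho(X_i(u))$ does not depend on $x_i$. In the original identity we may therefore replace $x_i$ by an arbitrary $w\in L$, keeping the other $x_j$ fixed. Setting $Y$ for the fixed $(n-3)$ other factors, this gives
\[
\rho(Y\wedge u)(a)\,B(w,v)+\rho(Y\wedge v)(a)\,B(w,u)=0\quad\text{for all } w.
\]
Strong nondegeneracy of $B$ turns this into an identity of elements of $L$:
\[
\rho(Y\wedge u)(a)\,v+\rho(Y\wedge v)(a)\,u=0 .
\]
For $n=2$ the set $Y$ is empty and $\rho(u)$ plays this role; the same argument applies.

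\textbf{Step 3: conclude.} Set $\phi(u)=\rho(Y\wedge u)(a)\in A$. This is $A$-linear in $u$, and the identity reads $\phi(u)v=-\phi(v)u$ for all $u,v$. Taking $v=u$ gives $2\phi(u)u=0$, so $\phi(u)u=0$. We then localize at a prime $\mathfrak p$. There $L_{\mathfrak p}$ is free of rank $\ge2$, so we can pick basis elements $e_1,e_2$. Applying the identity to $(u,e_1)$ gives $\phi(u)e_1=-\phi(e_1)u$. Expanding $u=\sum c_ke_k$ and comparing the $e_2$-coefficients gives $\phi(e_1)c_2=0$. Choosing $u=e_2$ yields $\phi(e_1)=0$, and symmetrically $\phi(e_k)=0$ for every basis vector. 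By linearity $\phi=0$ on $L_{\mathfrak p}$. Localization commutes with $\phi$ because $\phi$ is $A$-linear and $L$ is finitely generated. Hence $\phi(u)$ vanishes in $A_{\mathfrak p}$ for all $\mathfrak p$, so $\phi(u)=0$. Since $u$, $Y$ and $a$ were arbitrary, $\rho\equiv0$.

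The main obstacle is this final step: passing from $\phi(u)v=-\phi(v)u$ to $\phi=0$. This is exactly where the rank $\ge2$ hypothesis enters; in rank one the identity is satisfied trivially. The localization must be carried out carefully, for instance by regarding $\phi$ as an element of $L^*$ and localizing there. Step 2 also needs a check that the decoupling is legitimate. We use that the anchor term is independent of $x_i$ and that $x_i$ enters the bracket identity only through $B(x_i,\cdot)$, via Lemma~\ref{lem:bracket-shift}.
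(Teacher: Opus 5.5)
Your proof is correct, and at the decisive step it takes a different route from the paper. The paper keeps $x_i$ fixed. It reads Corollary~\ref{cor:global-invariance} as antisymmetry of the rank-one form $\varphi\otimes\psi$ with $\psi=B(x_i,\cdot)$, argues on a local basis that $\varphi=0$ or $\psi=0$, uses $\psi\not\equiv0$ to get $\varphi=0$, and finally sets $u=x_i$ to obtain $\rho(X)=0$. You instead notice that the anchor coefficient $\rho(X_i(u))(a)$ does not involve $x_i$, and let $x_i=w$ range over all of $L$. This is legitimate, since the corollary holds for every choice of $X$. Symmetry of $B$ and injectivity of $B^\flat$ then upgrade the scalar identity to the vector identity $\phi(u)v+\phi(v)u=0$ in $L$, and comparing coefficients in a local basis kills $\phi$.

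The paper's version is more local in the fundamental variable: it uses only invariance along the rescalings $X_{a,i}$ of one fixed $X$. However, its dichotomy step divides by $\varphi(e_{k_0})$ in a local ring, where this need not be a unit or even a non-zero-divisor. It also only controls $\varphi$ near points where $B(x_i,\cdot)$ is locally nonzero. Your version needs no division and no nonvanishing of $B(x_i,\cdot)$, works over an arbitrary commutative $K$-algebra $A$, and uses only injectivity of $B^\flat$.

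Two small corrections:
\begin{itemize}
\item $Y$ consists of the $n-2$ remaining factors, not $n-3$.
\item Your closing claim that in rank one the identity is ``satisfied trivially'' is wrong. Your own specialization $v=u$ gives $\phi(u)u=0$, hence $\phi(e_k)e_k=0$ and so $\phi(e_k)=0$ for every local basis vector. Your argument therefore never really needs rank $\ge2$; the final step is not where that hypothesis matters.
\end{itemize}
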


\begin{proof}
Fix a decomposable $X=x_1\wedge\cdots\wedge x_{n-1}$ with $x_i\neq0$, an
index $i$, and $a\in A$. By Corollary~\ref{cor:global-invariance},
\[
\varphi(u)B(x_i,v)+\varphi(v)B(x_i,u)=0\qquad\text{for all }u,v\in L,
\]
where $\varphi(u)=\rho(X_i(u))(a)$. The map $\varphi:L\to A$ is
$A$-linear, being a composition of $A$-linear maps, and so is
$\psi:=B(x_i,\cdot)$. The identity says that the rank-$\le 1$ symmetric
tensor $\varphi\otimes\psi+\psi\otimes\varphi$ vanishes, i.e.\ the
bilinear form $\Psi(u,v)=\varphi(u)\psi(v)$ is antisymmetric.

Localize at a point where $B$, hence $\Psi$'s ambient pairing, is
represented by an invertible matrix; strong nondegeneracy and finite
projectivity make this cover $L$. On a free module of rank $\ge2$, a
rank-$\le1$ antisymmetric form $\varphi\otimes\psi$ forces $\varphi=0$ or
$\psi=0$: evaluating on a basis $(e_k)$, antisymmetry
$\varphi(e_k)\psi(e_l)=-\varphi(e_l)\psi(e_k)$ for all $k,l$ shows that if
$\varphi(e_{k_0})\neq0$ for some $k_0$, then $\psi(e_l)=
-\varphi(e_l)\psi(e_{k_0})/\varphi(e_{k_0})$ for all $l\neq k_0$, and
taking $k=l=k_0$ gives $\varphi(e_{k_0})\psi(e_{k_0})=0$, so
$\psi(e_{k_0})=0$ and hence $\psi\equiv0$ on this chart; symmetrically if
$\varphi\equiv0$ we are done. Since $x_i\neq0$ and $B$ is strongly
nondegenerate, $\psi=B(x_i,\cdot)\not\equiv0$ near a point where
$x_i\neq0$, forcing $\varphi\equiv0$ there.

Thus $\rho(X_i(u))(a)=0$ for every $u\in L$ and every $a\in A$, i.e.\
$\rho(X_i(u))=0$. Taking $u=x_i$ gives $X_i(x_i)=(-1)^{n-1-i}X$, whence
$\rho(X)=0$. This holds for every decomposable $X$, and by
$A$-multilinearity for every $X\in\Aext{n-1}L$: $\rho\equiv0$.
\end{proof}

\begin{remark}
The rank-$\ge2$ hypothesis is essential: on a rank-$1$ module the
argument above degenerates, since a rank-$\le1$ antisymmetric form on a
rank-$1$ free module is automatically zero for a different (trivial)
reason, and $\varphi$ need not vanish. The geometric anchors of
Section~\ref{sec:nambu} typically satisfy this hypothesis away from a
proper closed locus.
\end{remark}

\section{Nambu--Poisson anchors}
\label{sec:nambu}

We now discuss a geometric situation in which the anchor radical admits a
particularly simple interpretation, following the Nambu--Poisson
framework of \cite{Tak} and the associated $n$-ary Lie algebroids of
\cite{Val1,Val2}.

Let $M$ be a smooth manifold and let $P\in\Gamma(\bigwedge^n TM)$ be an
$n$-vector. It induces a geometric anchor
\[
P^{\sharp}:\Aext{n-1}T^{*}M\to TM,\qquad
\beta\bigl(P^{\sharp}(\alpha_1,\ldots,\alpha_{n-1})\bigr)
=P(\alpha_1,\ldots,\alpha_{n-1},\beta),
\]
together with its cotangent annihilator
$\Ann(P)=\{\alpha\in T^{*}M:i_\alpha P=0\}$.

\begin{proposition}
For the anchor $\rho=P^{\sharp}$, one has pointwise $\Rad(\rho)=\Ann(P)$.
\end{proposition}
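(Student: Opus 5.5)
Fix a point $m\in M$ and work in the fibre $T^{*}_mM$, where the $C^\infty(M)$-multilinearity of $P^\sharp$ reduces to ordinary $\mathbb{R}$-multilinearity. Here $\Rad(\rho)$ is read pointwise:
\[
\Rad(\rho)_m=\{\alpha\in T^*_mM:\ P^\sharp(\alpha,\beta_2,\ldots,\beta_{n-1})=0\ \text{for all}\ \beta_2,\ldots,\beta_{n-1}\in T^*_mM\}.
\]
The aim is to show that this condition is equivalent to $i_\alpha P=0$, by translating vanishing of the vector $P^\sharp(\ldots)$ into vanishing of $P$ evaluated on covectors.

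The first step is to unfold the definition of $P^\sharp$. A tangent vector $w\in T_mM$ is zero if and only if $\beta(w)=0$ for every $\beta\in T^*_mM$. Hence
\[
P^\sharp(\alpha,\beta_2,\ldots,\beta_{n-1})=0 \iff P(\alpha,\beta_2,\ldots,\beta_{n-1},\beta)=0\ \text{for all}\ \beta .
\]
Therefore $\alpha\in\Rad(\rho)_m$ if and only if $P(\alpha,\gamma_2,\ldots,\gamma_n)=0$ for all $\gamma_2,\ldots,\gamma_n\in T^*_mM$. This last condition says exactly that the $(n-1)$-vector $i_\alpha P$ vanishes, because an $(n-1)$-vector is determined by its values on $(n-1)$-tuples of covectors.

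The one point needing care is the convention for the interior product. If $i_\alpha P$ is defined by contraction in a slot other than the first, then antisymmetry of $P$ changes $P(\alpha,\gamma_2,\ldots,\gamma_n)$ only by a sign, so the vanishing condition is unaffected. Nothing here is deep; the main thing to state explicitly is that the identification is fibrewise. If one wants it at the level of sections over $A=C^\infty(M)$, apply the same argument at each point: a section lies in $\Rad(\rho)$ if and only if its value lies in $\Rad(\rho)_m$ for every $m$, since sections of $T^*M$ realise all covectors at a point and $\rho$ is tensorial.
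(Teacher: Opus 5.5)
Your proof is correct and follows the paper's argument essentially verbatim. You fix a point and use the defining identity of $P^\sharp$ to turn vanishing of $P^\sharp(\alpha,\alpha_2,\ldots,\alpha_{n-1})$ into vanishing of $P(\alpha,\alpha_2,\ldots,\alpha_{n-1},\beta)$ for all $\beta$, and then read this as $i_\alpha P=0$. Your extra remarks, on the interior-product sign convention and on passing from fibres to sections via tensoriality, are sound refinements that the paper leaves implicit.
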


\begin{proof}
Let $\alpha\in T_p^{*}M$. By definition,
$\alpha\in\Rad(\rho)_p$ iff $P^{\sharp}(\alpha,\alpha_2,\ldots,\alpha_{n-1})=0$
for all $\alpha_2,\ldots,\alpha_{n-1}\in T_p^{*}M$, which is equivalent to
$P(\alpha,\alpha_2,\ldots,\alpha_{n-1},\beta)=0$ for all
$\alpha_2,\ldots,\alpha_{n-1},\beta\in T_p^{*}M$, i.e.\ to $i_\alpha P=0$.
Hence $\Rad(\rho)_p=\Ann(P)_p$ at every $p$.
\end{proof}

\begin{example}
Take $M=\mathbb{R}^4$ with coordinates $(x,y,z,t)$ and
$P=\partial_x\wedge\partial_y\wedge\partial_z$ (so $n=3$). Then
$P^{\sharp}(dx\wedge dy)=\partial_z$, so the anchor is nonzero; on the
other hand $i_{dt}P=0$. We claim that, pointwise,
$\Ann(P)=\mathbb{R}\,dt$, hence $\Gamma(\Ann(P))=C^{\infty}(M)\,dt$.
Write $\alpha=a\,dx+b\,dy+c\,dz+d\,dt$. Then
$i_\alpha P=a\,\partial_y\wedge\partial_z-b\,\partial_x\wedge\partial_z
+c\,\partial_x\wedge\partial_y$,
so $i_\alpha P=0$ iff $a=b=c=0$, i.e.\ $\alpha=d\,dt$. Hence
$\Rad(P^{\sharp})=C^{\infty}(M)\,dt$.
\end{example}

Let $N$ and $F$ be smooth manifolds and set $M=N\times F$. Suppose given
an $n$-vector $P_N\in\Gamma(\bigwedge^n TN)$ on $N$, and let $P$ denote
its horizontal lift to $M$. The product decomposition
$T^{*}M\simeq p_N^{*}T^{*}N\oplus p_F^{*}T^{*}F$ allows us to compare
$\Ann(P)$ with $\Ann(P_N)$. Since $P$ is horizontal, every vertical
covector $\eta\in p_F^{*}T^{*}F$ satisfies $i_\eta P=0$, so
$p_F^{*}T^{*}F\subseteq\Ann(P)$.

\begin{proposition}
If $\Ann(P_N)=0$, then $\Ann(P)=p_F^{*}T^{*}F$, and consequently
$\Rad(P^{\sharp})=p_F^{*}T^{*}F$.
\end{proposition}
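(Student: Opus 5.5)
We work pointwise at $p=(q,f)\in M=N\times F$ and split any covector along the product decomposition $T_p^{*}M\simeq T_q^{*}N\oplus T_f^{*}F$. The inclusion $p_F^{*}T^{*}F\subseteq\Ann(P)$ was already noted just before the statement, so only the reverse inclusion needs proof. Once $\Ann(P)=p_F^{*}T^{*}F$ is established, the second claim follows directly from the preceding proposition identifying $\Rad(P^{\sharp})$ with $\Ann(P)$ pointwise.

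For the reverse inclusion, take $\alpha\in\Ann(P)_p$ and write $\alpha=\xi+\eta$ with $\xi\in(p_N^{*}T^{*}N)_p$ and $\eta\in(p_F^{*}T^{*}F)_p$. Contraction is linear and $i_\eta P=0$, so $i_\xi P=i_\alpha P=0$. The key step is to compare contraction against the horizontal lift with contraction on $N$. Since $P$ is the horizontal lift of $P_N$, at $p$ it is the image of $(P_N)_q$ under the inclusion $\bigwedge^n T_qN\hookrightarrow\bigwedge^n T_pM$ induced by $T_qN\hookrightarrow T_qN\oplus T_fF$. For $\xi=p_N^{*}\bar\xi$ with $\bar\xi\in T_q^{*}N$, we have $\xi(v,0)=\bar\xi(v)$. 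It follows that $i_\xi P$ is the image of $i_{\bar\xi}(P_N)_q$ under the same inclusion on $\bigwedge^{n-1}$.

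We then argue as follows. The induced map $\bigwedge^{n-1}T_qN\to\bigwedge^{n-1}T_pM$ is injective, since it comes from a split injection of vector spaces. Hence $i_\xi P=0$ gives $i_{\bar\xi}(P_N)_q=0$, so $\bar\xi\in\Ann(P_N)_q=0$. Therefore $\xi=0$ and $\alpha=\eta$ is vertical.

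The only step needing care is the compatibility of contraction with the lift, together with the injectivity of the induced map on exterior powers. We check the compatibility on decomposables $v_1\wedge\cdots\wedge v_n$ using the alternating-sum formula for interior products. Injectivity follows from choosing a basis of $T_pM$ that extends a basis of $T_qN$. The hypothesis $\Ann(P_N)=0$ is taken pointwise at every $q\in N$; with that reading, the equality of subbundles, and hence of the radical, holds at every point.
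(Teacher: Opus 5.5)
Your proof is correct and follows the paper's route. You split $\alpha$ into horizontal and vertical parts, note that the vertical part contracts trivially against the horizontal $P$, use the pointwise hypothesis $\Ann(P_N)=0$ to kill the horizontal part, and then invoke the pointwise identification $\Rad(P^{\sharp})=\Ann(P)$. The paper asserts without comment that $i_{\alpha_N}P=0$ implies $i_{\alpha_N}P_N=0$; you justify this step, via compatibility of contraction with the lift and injectivity of $\bigwedge^{n-1}T_qN\to\bigwedge^{n-1}T_pM$.
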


\begin{proof}
Decompose $\alpha=\alpha_N+\alpha_F$ and suppose $i_\alpha P=0$. Since $P$
is horizontal, $i_{\alpha_F}P=0$ identically, so
$0=i_\alpha P=i_{\alpha_N}P+i_{\alpha_F}P=i_{\alpha_N}P$. Thus
$i_{\alpha_N}P_N=0$; since $\Ann(P_N)=0$, $\alpha_N=0$, so
$\alpha=\alpha_F\in p_F^{*}T^{*}F$. Together with the reverse inclusion
already noted, $\Ann(P)=p_F^{*}T^{*}F$; the last statement follows from
the previous proposition applied to $\rho=P^{\sharp}$.
\end{proof}

\bigskip
\noindent
{\sc Irmely Gladesh Mabanza Nsiloulou, Basile Guy Richard Bossoto}\\
Department of Mathematics, Faculty of Science and Technology\\
Marien Ngouabi University, Brazzaville, Congo\\
\emph{E-mail}: \texttt{irmelygladesh@gmail.com}, \texttt{basile.bossoto@umng.cg}


\begin{thebibliography}{99}

\bibitem{Fil} V.~T.~Filippov, \emph{$n$-Lie algebras}, Siberian Math. J.
\textbf{26} (1985), 879--891.

\bibitem{BCEM} A.~Ben Hassine, T.~Chtioui, M.~Elhamdadi, S.~Mabrouk,
\emph{Extensions and crossed modules of $n$-Lie--Rinehart algebras}, Adv.
Appl. Clifford Algebras \textbf{32} (2022), Article 31.

\bibitem{BCZ} Y.~Bi, Z.~Chen, T.~Zhang, \emph{On (co-)morphisms of
$n$-Lie--Rinehart algebras with applications to Nambu--Poisson manifolds},
J. Geom. Phys. \textbf{214} (2025), 105536.

\bibitem{BOO} B.~G.~R.~Bossoto, E.~Okassa, M.~Omporo, \emph{Lie algebra of
an $n$-Lie algebra}, arXiv:1310.2433 (2013).

\bibitem{Tak} L.~Takhtajan, \emph{On foundation of the generalized Nambu
mechanics}, Comm. Math. Phys. \textbf{160} (1994), 295--315.

\bibitem{JLZ} Y.~Jin, W.~Liu, Z.~Zhang, \emph{Metric $n$-Lie algebras},
Comm. Algebra \textbf{39} (2011), 572--583.

\bibitem{Val1} J.~A.~Vallejo, \emph{Nambu--Poisson manifolds and
associated $n$-ary Lie algebroids}, J. Phys. A: Math. Gen. \textbf{34}
(2001), 2867--2881.

\bibitem{Val2} J.~A.~Vallejo, \emph{Corrigendum: Nambu--Poisson manifolds
and associated $n$-ary Lie algebroids}, J. Phys. A: Math. Gen.
\textbf{34} (2001), 9753.

\end{thebibliography}
\end{document}